\newcommand{\R}{\mathbb{R}}
\newcommand{\E}{\mathbb{E}}
\DeclareMathOperator{\Cov}{Cov}
\newtheorem{theorem}{Theorem}
\newtheorem{remark}{Remark}
\definecolor{lightblue}{rgb}{0.95,0.97,1}
\definecolor{darkblue}{rgb}{0.2,0.4,0.7}
\newcommand{\reviewbox}[2]{%
    \vspace{8pt}
    \noindent\fbox{\colorbox{lightblue}{\parbox{0.92\textwidth}{%
        \vspace{4pt}
        {\color{darkblue}\textbf{#1}}
        \vspace{2pt}
        \hrule height 0.5pt
        \vspace{4pt}
        #2
        \vspace{4pt}
    }}}
    \vspace{8pt}
}
\title{Can LLMs Write Mathematics Papers? A Case Study in Reservoir Computing}
\author{Allen Hart}
\date{September 27, 2025}
\begin{document}

\maketitle

\begin{abstract}
As AI capabilities continue to grow exponentially on economically relevant human expert tasks, with task completion horizons doubling every 7 months according to the Model Evaluation and Threat Research (METR), we are interested in how this applies to the task of mathematics research. To explore this, we evaluated the capability of four frontier large language models (LLMs), ChatGPT 5, Claude 4.1 Opus, Gemini 2.5 Pro, and Grok 4, at the task of creating a mini-paper on reservoir computing. All models produced engaging papers with some apparent understanding of various techniques, but were sometimes lead to mistakes by surface level understanding of key ideas. That said, the capabilities on LLMs on this task was likely as good or greater than that predicted by METR.
\end{abstract}

\section{Introduction}

The rapid advancement of large language models (LLMs) has raised fundamental questions about their potential role in mathematics research. LLMs have demonstrated remarkable capabilities in code generation (see SWE-bench \cite{jimenez2024} leaderboards), mathematical reasoning (including gold medals at the International Mathematical Olympiad \cite{deepmind2025imo,reuters2025imo}), and technical writing. 
METR's \cite{kwa2025metr} recent findings show frontier models can now complete tasks in software engineering, cyber security, general reasoning and machine learning requiring $\sim$1 hour of expert human effort with 50\% reliability, with the time horizon doubling approximately every 7 months since 2019. Similarly, OpenAI's proposed GDP-VAL benchmark aims to measure economically valuable work, with their research suggesting that Claude Opus 4.1 outperforms or equals human experts in 48\% of evaluated tasks. Against this background of extraordinary progress in AI in these tasks, our paper evaluates the performance of leading LLMs on mathematical research, a task involving the synthesis of mathematical reasoning, code implementation, and academic writing.

We asked the LLMs to write papers on reservoir computing \cite{jaeger2001,maass2002}, a machine learning paradigm that offers an interesting test case, largely because it combines mathematical theory (dynamical systems, linear algebra, probability), with numerical experiments.
Our experiment design simulates a realistic research scenario: a supervisor providing a research direction to a student, expecting them to understand the mathematical framework, prove a result (similar to what \cite{feldman2025} call the G\"{o}del test), implement a numerical experiment, and write up the findings in a conventional academic sytle. This approach differs from typical LLM benchmarks that focus on isolated capabilities such as MMLU \cite{hendrycks2021}, HumanEval \cite{chen2021}, or GSM8K \cite{cobbe2021}, instead testing the integration of multiple skills required for mathematical research.

\section{Methods}

\subsection{Task Design}

The task was communicated to the models using three sequential prompts, delivered one at a time with the model required to complete each before receiving the next. This approach is a very simple form of scaffolding, a technique widely used to improve complex task completion \cite{wu2022,suzgun2024}. By breaking down complex tasks into smaller, more manageable subtasks handled sequentially, scaffolding guides LLMs through multi-step reasoning processes that would be difficult to accomplish in a single prompt. Each model was tested in a new conversation instance to avoid contamination from previous interactions. The prompts are as follows:

\textbf{Prompt 1:}
\begin{quote}
Your goal is to write a mini paper on reservoir computing using the papers 

https://arxiv.org/pdf/2211.09515 

and 

https://arxiv.org/pdf/2108.05024 

for context. This specifically involves 3 tasks.

1. Given the reservoir system $X_t = AX_{t-1} + C(\omega\phi^t(m) + \mathcal{N}(0,\sigma^2))$ where $\mathcal{N}(0,\sigma^2)$ is a normal random variable and the remaining terms are those described in equation (1.2) in https://arxiv.org/pdf/2108.05024, derive an expression for the random variable $X_t$ in the limit as $t \to \infty$ as a sum of the generalised synchronisation $f_{(\phi,\omega,F)}$ (equation (1.3) in https://arxiv.org/pdf/2108.05024) and a random variable. I recommend you use similar arguments as seen in theorem 7.1 in https://arxiv.org/pdf/2211.09515 which is set in continuous time.

2. Write python code that reconstructs the experiment described in section 5 of https://arxiv.org/pdf/2108.05024 which is entitled ``the Lorenz system. Forecasting in the presence of noise''. Repeat the experiment with $A,C$ chosen to correspond to the Takens' delay map, as described in the section entitled ``Relation with Takens' Theorem'' in https://arxiv.org/pdf/2108.05024. Now create relevant plots that allow us to compare the results of the Takens delay map to the randomly generated A,C to see if one is better than the other.

3. Create latex file contains includes the mini paper, including short intro, and conclusion, the derivation of the expression for $X_t$ as described in task 1 and plots output by task 2.

We will do each task one at a time. Please think for as long as necessary to output the best first go at task 1.
\end{quote}

\textbf{Prompt 2:}
\begin{quote}
Thank you, Now please think for as long as necessary to output the best first go at task 2.
\end{quote}

\textbf{Prompt 3:}
\begin{quote}
Thank you, Now please think for as long as necessary to output the best first go at task 3.
\end{quote}

The prompts referenced two specific arXiv papers, requiring models to understand and synthesize existing work rather than generate content from scratch. A typographical error (``contains includes'' in task 3 of Prompt 1) was discovered only after the experiment began but was left unchanged for consistency. While unintentional, this error incidentally tests the models' robustness to typos which are common in human communication.

\subsection{Small and Big World Benchmarks}

Our task differs fundamentally from existing mathematical benchmarks like the International Mathematical Olympiad (IMO) problems \cite{imo2024} or physics competitions \cite{ipho2024}, where all information needed to solve problems is self-contained. While models have achieved impressive performance on such benchmarks, including gold medals at IMO \cite{deepmind2025imo,reuters2025imo}, these problems represent what Sutton and colleagues call ``small world'' problems where complete information is available upfront \cite{javed2024}.

In contrast, mathematics research operates in what the Big World Hypothesis describes as environments where ``the world is multiple orders of magnitude larger than the agent'' \cite{javed2024}. Real research requires synthesizing ideas across a vast space of papers and books, identifying relevant connections that may not be obvious, and applying methods from disparate fields. Our task, requiring models to read two technical papers, understand relevant context, and apply methods from linear algebra and probability theory, represents a step toward evaluating these ``big world'' capabilities.

\subsection{Evaluation Framework}

Papers were evaluated across three dimensions, which would be a reasonable rubric for evaluating work produced by a student:
\begin{enumerate}
    \item \textbf{Mathematical Correctness}: Accuracy of the theoretical derivation, proper use of notation, logical flow of arguments
    \item \textbf{Experimental Implementation}: Code functionality, adherence to specifications, parameter choices
    \item \textbf{Writing Quality}: Structure, clarity, appropriate use of academic conventions including proper references
\end{enumerate}

Each paper received detailed reviewer comments that I (the author) integrated directly into the PDF, mimicking academic peer review.

\subsection{Revision Phase}

After initial review, models were given an opportunity to revise their papers. Each model received the same prompt:

\begin{quote}
Thank you, I have reviewed the mini-paper and will return the reviewed paper. Please take as long as you need write a second draft in Latex.
\end{quote}

This revision phase tested models' ability to incorporate feedback and improve their work, a crucial aspect of real research. While not the primary focus of this analysis, the revision responses revealed interesting patterns in how models handle criticism and correction, which we discuss briefly in Section 4.8.

\section{Results}

\subsection{Processing Time and Resources}

Models required approximately 5--10 minutes of processing time per task over the 4 prompts. Without public disclosure of model architectures and sizes, computation resource usage by the models is unknown.

\subsection{Mathematical Derivation Performance}

The models all performed the derivation well. I have included an excerpt from Claude's paper (which I thought had the best first go at this problem) with reviewer comments.

\noindent\textbf{\emph{Claude Opus 4.1:}}
\begin{quote}

\section*{Theoretical Framework}

\subsection*{Reservoir System with Noise}

Consider a discrete-time reservoir system driven by a deterministic dynamical system $\phi: M \to M$ on a manifold $M$, observed through a measurement function $\omega: M \to \R^d$. The reservoir state $X_t \in \R^N$ evolves according to:
\begin{equation}
X_t = AX_{t-1} + C(\omega\phi^t(m) + \xi_t)
\label{eq:reservoir_noise}
\end{equation}
where:
\begin{itemize}
    \item $A \in \R^{N \times N}$ is the reservoir matrix with spectral radius $\rho(A) < 1$
    \item $C \in \R^{N \times d}$ is the input coupling matrix
    \item $\phi^t(m)$ denotes the $t$-th iterate of $\phi$ starting from $m \in M$
    \item $\xi_t \sim \mathcal{N}(0, \sigma^2 I_d)$ represents i.i.d. Gaussian measurement noise
\end{itemize}

\reviewbox{REVIEWER COMMENT 4. Theoretical set up}{Claude makes an interesting choice here to define $\omega : M \to \mathbb{R}^d$ for general dimension $d$ rather than $d = 1$ which is all that is necessary for this paper.}

\subsection*{Generalized Synchronization}

In the absence of noise ($\sigma = 0$), Hart et al. \cite{hart2021} established that under the echo state property, the reservoir state converges to a unique generalized synchronization function:
\begin{equation}
f_{(\phi,\omega,F)}(x) = \sum_{k=0}^{\infty} A^k C \omega(\phi^{k+1}(x))
\label{eq:gen_sync}
\end{equation}
This function satisfies the consistency relation $f_{(\phi,\omega,F)}(\phi(x)) = Af_{(\phi,\omega,F)}(x) + C\omega(\phi(x))$.

\reviewbox{REVIEWER COMMENT 5. Generalised synchronisation}{This is good except for a confusing decision to use $x$ instead of $m \in M$. The echo state property is included without a citation or definition. It is well known but could benefit from at least reference. The inclusion of the consistency relation is nice.}

\subsection*{Asymptotic Behavior with Noise}

We now derive the asymptotic behavior of the reservoir system \eqref{eq:reservoir_noise} in the presence of noise.

\begin{theorem}[Reservoir Decomposition with Noise]
\label{thm:main}
Consider the reservoir system \eqref{eq:reservoir_noise} with $\rho(A) < 1$. As $t \to \infty$, the reservoir state converges in distribution to:
\begin{equation}
X_\infty = f_{(\phi,\omega,F)}(\phi^t(m)) + Z_\infty
\label{eq:decomposition}
\end{equation}
where:
\begin{enumerate}
    \item $f_{(\phi,\omega,F)}(\phi^t(m))$ is the generalized synchronization function evaluated at the current state
    \item $Z_\infty \sim \mathcal{N}(0, \Sigma)$ is a Gaussian random variable independent of the deterministic dynamics
    \item The covariance matrix $\Sigma$ satisfies the discrete Lyapunov equation:
    \begin{equation}
    \Sigma = A\Sigma A^T + \sigma^2 CC^T
    \label{eq:lyapunov}
    \end{equation}
\end{enumerate}
\end{theorem}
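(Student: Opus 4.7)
The approach is to iterate the linear recursion \eqref{eq:reservoir_noise} into a closed-form sum, then split the sum into a deterministic piece driven by $\omega\circ\phi^t$ and a stochastic piece driven by the noise, and analyze each separately using $\rho(A)<1$. The deterministic piece will be shown to approach $f_{(\phi,\omega,F)}(\phi^t(m))$ as $t \to \infty$, and the stochastic piece, being a fixed linear combination of i.i.d.\ Gaussians, is itself Gaussian with an explicit covariance satisfying the Lyapunov equation \eqref{eq:lyapunov}.

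\textbf{Execution.} Iterating \eqref{eq:reservoir_noise} yields
\begin{equation}
X_t \;=\; A^t X_0 \;+\; \underbrace{\sum_{k=0}^{t-1} A^k C\,\omega(\phi^{t-k}(m))}_{=:\, D_t} \;+\; \underbrace{\sum_{k=0}^{t-1} A^k C\,\xi_{t-k}}_{=:\, Z_t}.
\end{equation}
The spectral condition $\rho(A)<1$ gives $\|A^k\| \le c\, r^k$ for some $c > 0$ and $r \in (0,1)$, so the initial-condition term vanishes deterministically. After matching the shift in the summation index between $D_t$ and the series in \eqref{eq:gen_sync}, the difference $D_t - f_{(\phi,\omega,F)}(\phi^t(m))$ is a geometric tail bounded by $c\, r^t \,\|\omega\|_\infty /(1-r) \to 0$, provided $\omega$ is bounded on the orbit closure. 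For $Z_t$, since the $\xi_j$ are i.i.d.\ $\mathcal{N}(0,\sigma^2 I_d)$, it is exactly Gaussian with mean zero and covariance
\begin{equation}
\Sigma_t \;=\; \sigma^2 \sum_{k=0}^{t-1} A^k C C^T (A^T)^k,
\end{equation}
which converges absolutely to some $\Sigma$ by the same spectral bound. Telescoping the partial sums (equivalently, peeling off the $k=0$ term and relabeling) immediately gives $\Sigma - A\Sigma A^T = \sigma^2 CC^T$, that is, \eqref{eq:lyapunov}. Since Gaussians are characterized by their mean and covariance, $Z_t$ converges in distribution to $Z_\infty \sim \mathcal{N}(0,\Sigma)$, and Slutsky's lemma combined with the deterministic limits yields \eqref{eq:decomposition}.

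\textbf{Main obstacle.} There is no genuine analytic difficulty; the whole argument is driven by the geometric decay $\|A^k\| \le c\, r^k$. The two places a careful reader is most likely to catch something are (i) verifying the right assumption on $\omega$ (boundedness on the attractor, or an $L^p$ moment condition) so that the tail estimate controlling $D_t - f_{(\phi,\omega,F)}(\phi^t(m))$ is valid, and (ii) the notation in the statement itself: the expression $X_\infty = f_{(\phi,\omega,F)}(\phi^t(m)) + Z_\infty$ still has $t$ on the right, so the precise assertion is that the \emph{residual} $X_t - f_{(\phi,\omega,F)}(\phi^t(m))$ converges in distribution to $Z_\infty$, not that $X_t$ itself has a stationary law. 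I would flag this and rephrase the statement accordingly before giving the three-step argument above.
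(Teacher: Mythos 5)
Your proof takes exactly the same route as the paper's: iterate the recursion into a closed-form sum, split it into initial condition, deterministic drive, and stochastic noise, kill the first term using $\rho(A)<1$, identify the second with $f_{(\phi,\omega,F)}(\phi^t(m))$, and show the third is a zero-mean Gaussian whose covariance telescopes into the Lyapunov equation. You are somewhat more careful than the paper in the details (explicit spectral bound $\|A^k\|\le c\,r^k$, explicit boundedness assumption on $\omega$, and Slutsky to recombine), and your closing remark is a genuine catch: the right-hand side of \eqref{eq:decomposition} still depends on $t$, so the honest statement is that the \emph{residual} $X_t - f_{(\phi,\omega,F)}(\phi^t(m))$ converges in distribution to $Z_\infty$, not that $X_t$ has a stationary law.

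There is, however, a gap you share with the paper's own proof, and which the surrounding text of the paper explicitly identifies as a deficiency common to all the LLM-generated derivations. The step where you ``match the shift in summation index'' between $D_t$ and \eqref{eq:gen_sync} does not go through with \eqref{eq:gen_sync} as written. You have $D_t = \sum_{k=0}^{t-1} A^k C\,\omega(\phi^{t-k}(m))$, whereas \eqref{eq:gen_sync} gives $f_{(\phi,\omega,F)}(\phi^t(m)) = \sum_{k=0}^{\infty} A^k C\,\omega(\phi^{t+k+1}(m))$: the powers of $\phi$ run in opposite directions, so no relabelling makes $D_t - f_{(\phi,\omega,F)}(\phi^t(m))$ a geometric tail. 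The formula that actually does the job (and satisfies the stated consistency relation $f(\phi(x)) = Af(x) + C\omega(\phi(x))$) is $f(x) = \sum_{k=0}^{\infty} A^k C\,\omega(\phi^{-k}(x))$, which presupposes $\phi$ invertible; then $D_t - f(\phi^t(m)) = -\sum_{k\ge t} A^k C\,\omega(\phi^{t-k}(m))$ and your geometric estimate closes. So the analytic skeleton of your argument is right, but you should state the invertibility hypothesis on $\phi$ and use the corrected synchronization formula — precisely the omission the paper points out that none of the four models caught.
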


\begin{proof}
We proceed by recursive expansion of the reservoir dynamics. From equation \eqref{eq:reservoir_noise}, we have:
\begin{equation}
X_t = A^t X_0 + \sum_{k=0}^{t-1} A^{t-1-k}C(\omega\phi^{k+1}(m) + \xi_{k+1})
\end{equation}

This can be decomposed into three components:
\begin{equation}
X_t = \underbrace{A^t X_0}_{\text{Initial condition}} + \underbrace{\sum_{k=0}^{t-1} A^{t-1-k}C\omega\phi^{k+1}(m)}_{\text{Deterministic drive}} + \underbrace{\sum_{k=0}^{t-1} A^{t-1-k}C\xi_{k+1}}_{\text{Stochastic component}}
\end{equation}

\textbf{Step 1: Initial condition decay.} Since $\rho(A) < 1$, we have $\|A^t\| \to 0$ exponentially, thus:
\begin{equation}
\lim_{t \to \infty} A^t X_0 = 0
\end{equation}

\textbf{Step 2: Deterministic convergence.} The echo state property ensures that the deterministic sum converges absolutely:
\begin{equation}
\lim_{t \to \infty} \sum_{k=0}^{t-1} A^{t-1-k}C\omega\phi^{k+1}(m) = \sum_{k=0}^{\infty} A^k C\omega\phi^{k+1}(\phi^t(m)) = f_{(\phi,\omega,F)}(\phi^t(m))
\end{equation}
where we used the time-invariance of the infinite sum due to the stationarity of the driving dynamics.

\textbf{Step 3: Stochastic convergence.} Define $Z_t = \sum_{k=0}^{t-1} A^{t-1-k}C\xi_{k+1}$. Since $\xi_k$ are i.i.d. Gaussian with zero mean, $Z_t$ is also Gaussian with:
\begin{equation}
\E[Z_t] = 0
\end{equation}

The covariance matrix is:
\begin{align}
\Cov(Z_t) &= \E[Z_t Z_t^T] = \sum_{k=0}^{t-1} A^{t-1-k}C \E[\xi_{k+1}\xi_{k+1}^T] C^T (A^T)^{t-1-k}\\
&= \sigma^2 \sum_{k=0}^{t-1} A^k CC^T (A^T)^k
\end{align}

Since $\rho(A) < 1$, this sum converges as $t \to \infty$:
\begin{equation}
\lim_{t \to \infty} \Cov(Z_t) = \sigma^2 \sum_{k=0}^{\infty} A^k CC^T (A^T)^k = \Sigma
\end{equation}

where $\Sigma$ satisfies the discrete Lyapunov equation \eqref{eq:lyapunov}, which can be verified by:
\begin{align}
\Sigma &= \sigma^2 \sum_{k=0}^{\infty} A^k CC^T (A^T)^k\\
&= \sigma^2 CC^T + \sigma^2 \sum_{k=1}^{\infty} A^k CC^T (A^T)^k\\
&= \sigma^2 CC^T + A \left(\sigma^2 \sum_{j=0}^{\infty} A^j CC^T (A^T)^j\right) A^T\\
&= \sigma^2 CC^T + A\Sigma A^T
\end{align}

Therefore, $Z_t \xrightarrow{d} Z_\infty \sim \mathcal{N}(0, \Sigma)$ as $t \to \infty$, completing the proof.
\end{proof}

\begin{remark}
The decomposition \eqref{eq:decomposition} reveals that noise does not destroy the generalized synchronization property but rather adds a stationary Gaussian perturbation. The noise covariance $\Sigma$ depends on the reservoir architecture through matrices $A$ and $C$, suggesting that different designs may exhibit varying noise sensitivity.
\end{remark}

\reviewbox{REVIEWER COMMENT 6. Generalised synchronisation}{The theorem, proof, and remark seem excellent. The remark was not required by the prompt but it is thought provoking.}

\end{quote}

We can see that Claude synthesised ideas from reservoir computing (the echo state property, and generalised synchronisation) with general techniques in linear algebra and probability to obtain the formula for $X_t$ in the limit. Claude decided (without instruction in the prompt) to state and prove that the covariance matrix $\Sigma$ is the unique solution of a Lyapunov equation. This equation does not appear in either reference paper. The model also provides an unprompted and interesting remark about the noise depending on $A,C$. There is a sense that that Claude is showing some understanding of beyond symbolic manipulation. Now ChatGPT displayed a similar level of understanding of the material but opted to provide a short sketch proof without line by line symbolic manipulation. After my review complaining about this, ChatGPT produced a more detailed derivation including more steps, but some of these steps are very suspect. For instance a term $\eta_t^{(S)}$ was used in the proof but not defined anywhere. We can guess its meaning based on the definition of another term $\eta_t^{(T)}$, but then a limit is taken over both $S$ and $T$ simultaneously. Even if there is some way to make sense of the argument, the arguments given by the other models are much cleaner.

Next, Gemini defined very few of the symbols used in the argument, including failing to define $\omega$, $\phi$, $t$ or $m$, and proceeded to provide a very terse proof that skipped much of the symbolic manipulation. Gemini was able to largely fix these oversights after receiving the review - resulting in a clear proof. The final model Grok provided a detailed proof on the first try with all the steps included, similar to Claude, and was the only model to clearly emphasise that the derivation closely followed that described in one of the reference papers. Grok also refered to $\omega: \mathcal{M} \to \mathcal{R}$ a generic observation function, which is curious because the genericity is not defined or relevant to Grok's mini-paper. Grok likely used the word \emph{generic} here is because the word appears frequently in the reference paper where the property is necessary in that context. This suggests a level of imitation from Grok in place of deeper reasoning. Overall, the models were mostly successful at the derivation, but there was a subtle lack of precision across all the LLMs. For example none of the four machines noted that $\phi$ needed to be invertible for the arguments to go through - which is especially interesting because $\phi$ is always a diffeomorphism in the reference papers, and $\phi^{-j}$ is used in explicitly in the proofs.

\subsection{Experimental Implementation}

There is an important distinction between imitation and reasoning, and the implementation of the experiment revealed a lot of the former from the models. For example ChatGPT and Claude used linear regression to predict the next state of the target dynamical system (the Lorenz system). Using a linear predictor is the standard approach, but the approach will fail if the reservoir system is itself linear, which is the case in the mini-paper. This is because a linear reservoir system must be given a nonlinear predictor to maintain the universal approximation property. Claude even mentions in their paper that ``This approach leverages the universal approximation properties of random projections in high dimensions.'' which is incorrect because Claude used a linear predictor! This consideration of universality from Claude without then integrating the idea into the experimental design is a clear example of imitation. The decision to use a linear regressor is especially interesting because it explicitly deviates from the experiment described in the reference paper that the prompt suggested the models follow. This suggests the models prefer to repeat the standard approach over carefully reading the reference paper. 

Curiously, Gemini simply used a different and more common reservoir system altogether (an Echo State Network) instead, without elaboration. This choice disconnects the theory from the experiment suggesting the Gemini does not have a coherent vision of the purpose of the paper. ChatGPT did mention in the conclusion that using a neural network as a predictor could give better predictions than a linear regressor - which could be suggestive of some understanding - but is likely a lucky guess given the statement is true in many other contexts. Notably, Grok was the only model to correctly implement a neural network readout as specified in Section 5 of ``Learning strange attractors with reservoir systems'', using a deep network with 10 hidden layers and scaled sigmoid activations. Now considering overall performance across the four models, there were several experimental decisions which some models made but did not describe correctly in their paper, many of which are included in the review comments in the supplementary material. Despite some evidence of surface level mathematical understanding, and significant conceptual mistakes, all models generated working code without any need for debugging, outputting well formatted figures.

\subsection{Response to reviews}

There was a tendency in some models to overcorrect in response to reviews. For example I suggested to Grok that they connect their findings to the broader literature, which was achieved to some extent, but resulted in humerous additions to the paper like ``\emph{This mini-paper
connects to broader literature on embeddings in reservoir computing}''. There is an interesting sense here that because the AIs are presented in the from of chatbots they are highly motivated to respond strongly to the latest prompt, even at the expense of the original goal of writing a good paper. 

A notable shortcoming of both Grok and Gemini in their first draft was that the presentation was very unlike a paper, with Gemini providing no references at all and Grok providing references in the form of URL links. Both models were able to improve the formatting dramatically when prompted by the reviews to do so. An interesting quirk of the Gemini paper was that the model listed itself as the sole author.

Both Claude and ChatGPT initially produced references in the standard academic format but fabricated the author list, dates, and paper titles. After review both LLMs corrected this. This was very intriguing, because the models are intelligent enough to understand that references should not be fabricated, and are capable, if given enough compute, to produce correct ones. This is highly suggestive of an alignment failure, where both Claude and ChatGPT reasoned that producing accurate references was not worth the compute, and it was preferable to fake references. Work by OpenAI \cite{arxiv250915541} and Anthropic \cite{alignmentfaking2025} have shown LLMs have considerable self awareness and consider deceptions of this kind very frequently.

\section{Discussion}

\subsection{Mathematics Research and GDP-VAL}

The GDP-VAL tasks are economically relevant tasks that take between 4 and 7 hours for a human expert. A suite of LLMs, including the four tested in this paper, tested on these tasks met or exceeded human performance in 24-48\% cases, with Claude Opus 4.1 the best performing model achieving the score of 48\%. Thus, if the tasks in GDPeval generalise to the task of mathematics research, we would expect the papers produced by the models to be at the standard of a human mathematician given a time limit of 4-7 hours. This sounds very plausible to me, but is very hard to test explicitly. To design such an experiment there would need to be something like a review panel to assess the quality of the papers produced by both human experts and AIs for comparison. There is also the tricky question of subject matter expertise - for instance I am an author on both reference papers which would give me a considerable advantage over an expert in reservoir computing who has not read either paper, who in turn has a considerable advantage over a mathematician in another field.

\subsection{Limitations and Future Work}

This study has several limitations that should guide future work. First, the assessment and task design were created by a single mathematician (the author), which introduces subjective bias into both the rubric and the interpretation of results. A broader panel of evaluators with different backgrounds and standards would provide a more robust and reliable assessment framework. 

Second, the scope of mathematics considered here was very narrow: reservoir computing and its surrounding dynamical systems theory. Though I think it is a reasonable test case, the area was selected because it happens to be my own research area. Future evaluations should include a wider range of mathematical domains - ideally encompassing different styles, cultures and conventions from algebra and topology to as statistics and optimization. The level of difficulty and complexity of tasks should be explored further as well, especially as models improve.

Third, the experimental setup relied on a simple sequential prompting structure with three stages. More complex scaffolding approaches—such as agentic prompting, allowing models to retry or refine their answers, or using multi-agent collaborations—may reveal higher capabilities. Evaluating the effect of these different modes of interaction on mathematical performance remains an important open question.  

\section{Conclusion}

Research mathematics is not a ``small world'' problem like those represented in most existing benchmarks, but a ``big world'' problem \cite{javed2024}, requiring synthesis of ideas across large and diverse domains. In this study, we tested the capabilities of four frontier LLMs, ChatGPT 5, Gemini 2.5 Pro, Claude Opus 4.1, and Grok 4, on the big world task of writing a research-style mini-paper. The task required deriving a new result, implementing and running a related numerical experiment, and producing a coherent academic write-up. 

The models produced papers that were mostly clear, mathematically structured, and accompanied by functional code, though all contained flaws of precision and understanding. Importantly, the quality of the outputs was not far from what one might expect from a human researcher working only a few hours on the same task. This places the models’ current performance on research mathematics roughly on trend with, or slightly exceeding, the scaling predictions made by GDPeval and METR.  

The scope of the paper was narrowly defined, and the models demonstrated only limited ability to exercise what might be called ``research taste'' - the ability to formulate novel questions or select promising directions. A true artificial general intelligence would likely demonstrate such qualities by autonomously selecting its own research problems and cultivating its own interests. That challenge remains for future generations of models, but the progress already observed suggests that the transition from competent assistant to autonomous researcher may not be far away.

\bibliographystyle{unsrt}
\bibliography{references}

\end{document}